\newtheorem{thm}{Theorem}
\newtheorem{conj}{Conjecture}
\newtheorem{prop}{Proposition}
\newtheorem{question}{Question}
\theoremstyle{definition}
\newcommand\N{\mathbb{N}}
\newcommand\Q{\mathbb{Q}}
\newcommand\R{\mathbb{R}}
\newcommand\Z{\mathbb{Z}}
\newcommand\PLoS{{PL}_+ S^1}
\newcommand\PLoI{{PL}_+ I}
\newcommand\rot{\operatorname{rot}}
\title[PL circle homeomorphism periodic under renormalization]{A piecewise linear homeomorphism of the circle preserving
 rational points and \\
  periodic under renormalization}
\author[J. Belk]{James Belk}
\author[J. Hyde]{James Hyde}
\author[J. Tatch Moore]{Justin Tatch Moore}
\address{James Belk \\ School of Mathematics and Statistics \\ University of Glasgow \\
University Place \\
Glasgow G12 8QQ \\
United Kingdom}
\address{
James Hyde \\
Department of Mathematics and Statistics \\
Binghamton University \\
PO Box 6000 \\
Binghamton, New York 13902-6000
}
\address{Justin Tatch Moore \\ Department of Mathematics \\ Malott Hall \\ Cornell University \\ Ithaca, NY 14853-4201 USA}
\thanks{
These results were obtained in 2022 while all three authors were employed at Cornell University.
They were inspired by discussions with Michele Triestino during his
visit to Cornell in February 2022.
The authors would like to thank Isabella Liousse and Michele Triestino
for reading early drafts of this paper and offering
a number of helpful comments.
The third author was supported by NSF grant DMS-1854367.
}
\keywords{affine interval exchange transformations,  a.i.e.m., a.i.e.t.,
  renormalization, generalized interval exchange transformation,
  g.i.e.m., g.i.e.t., piecewise linear, Rauzy-Veech induction, rotation number, Stein-Thompson group, Thompson's group}
\subjclass[2020]{Primary: 37E45; Secondary: 20F38, 37E10, 54H15}
\begin{document}

\begin{abstract}
We demonstrate the existence of a piecewise linear
homeomorphism $f$ of $\R/\Z$ which maps rationals to rationals,
whose slopes are powers of $\frac{2}{3}$, and whose rotation number is $\sqrt{2}-1$.
This is achieved by showing that a renormalization procedure
becomes periodic when applied to $f$.
Our construction gives a negative answer to a question of D. Calegari \cite{Calegari}.
When combined with~\cite{F-obstruction},
our result also shows that $F_{\frac{2}{3}}$ does not embed into $F$, where
$F_{\frac{2}{3}}$ is the subgroup of the Stein-Thompson group $F_{2,3}$ consisting of those elements
whose slopes are powers of $\frac{2}{3}$.
Finally, we produce some evidence
suggesting a positive answer to a variation of Calegari's question and record a number of computational
observations.
\end{abstract}

\maketitle

If $f$ is an orientation preserving homeomorphism of the circle $S^1 = \R/\Z$, Poincar\'e defined the rotation number of $f$
to be:
\[
\rot(f) :=\lim_{n \to \infty} \frac{{\widetilde{f}}^n(0)}{n}
\]
modulo 1, where $\widetilde{f}:\R \to \R$ is a lift of $f$.
He proved the following theorem.

\begin{thm}
Let $f$ be a homeomorphism of the circle.
\begin{enumerate}
    \item $\rot(f) = \frac{p}{q}$ for some relatively prime $p,q$ if and only if $f$ has a periodic point of order $q$.\smallskip

    \item if $\theta:=\rot(f)$ is irrational, then there is an order preserving surjection
    $\phi: [0,1] \to [0,1]$ 
    such that $\phi(f(t)) = \phi(t) + \theta$ modulo 1. 
    
\end{enumerate}
\end{thm}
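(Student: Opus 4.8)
The plan is to follow Poincar\'e's classical argument, with the well-definedness of $\rot(f)$ as a preliminary. First I would record that the limit defining $\rot(f)$ exists and is independent of the choice of lift (modulo $1$) and of the basepoint: writing $a_n = \widetilde f^n(0)$ and using $\widetilde f(x+1) = \widetilde f(x)+1$ together with monotonicity of $\widetilde f$, one gets $|a_{m+n} - a_m - a_n| \le 1$, so a standard subadditivity (Fekete) argument applies; the same estimate yields the uniform bound $|\widetilde f^{\,n}(x) - n\rot(f)| < 1$ for every $x$, which I will use repeatedly below.

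For part (1), the substantive direction is that if $\rot(f) = \frac{p}{q}$ in lowest terms then $f$ has a periodic point of order $q$. Consider the lift $g := \widetilde f^{\,q} - p$ of $f^q$. If $g$ had no fixed point then $g(x) - x$, being continuous and $1$-periodic, would be bounded away from $0$, say $g(x) - x \ge \varepsilon > 0$ for all $x$; iterating the increasing map $g$ gives $g^n(0) \ge n\varepsilon$, hence $\widetilde f^{\,qn}(0) \ge np + n\varepsilon$ and $\rot(f) \ge \frac{p}{q} + \frac{\varepsilon}{q}$, a contradiction (and symmetrically if $g(x) - x < 0$ everywhere). So $g$ fixes some $x_0$, i.e.\ $f^q$ fixes $x_0 \bmod \Z$. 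That the minimal period is exactly $q$, and conversely that a periodic point of minimal period $q$ forces $\rot(f)$ to have denominator $q$, then follows from the standard observation that $f$ permutes any finite invariant set as a ``combinatorial rotation'' of that cyclically ordered set, so minimality of the period is equivalent to the rotation amount being coprime to the period. The easy direction of (1) is the direct computation $\rot(f) = \lim_n \widetilde f^{\,qn}(x_0)/(qn) = \frac{p}{q}$ from $\widetilde f^{\,q}(\widetilde x_0) = \widetilde x_0 + p$.

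For part (2), the heart of the matter is a combinatorial comparison lemma: the orbit of $0$ under $\widetilde f$, together with all its integer translates, is ordered on $\R$ in exactly the same way as the corresponding orbit under the rotation $R_\theta(x) = x + \theta$; that is, for integers $n,n',m,m'$,
\[
\widetilde f^{\,n}(0) + m < \widetilde f^{\,n'}(0) + m' \iff n\theta + m < n'\theta + m'.
\]
Using equivariance under integer translation (and that $\widetilde f^{\,n'}$ is increasing), this reduces to proving $\widetilde f^{\,k}(0) < j \iff k\theta < j$ for all integers $k, j$, which I would establish by iterating: $\widetilde f^{\,k}(0) < j$ forces $\widetilde f^{\,kn}(0) < jn$ for all $n \ge 1$, hence $\theta \le \frac{j}{k}$, while $\widetilde f^{\,k}(0) > j$ forces $\theta \ge \frac{j}{k}$; irrationality of $\theta$ --- which by part (1) also guarantees that no $k$ satisfies $\widetilde f^{\,k}(0) = j$ --- converts these into the required strict inequalities. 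Granting the lemma, I define $\psi$ on the (countable, order-isomorphic) orbit by $\psi(\widetilde f^{\,n}(0) + m) := n\theta + m$ and extend it to $\R$ by $\psi(x) := \sup\{\, n\theta + m : \widetilde f^{\,n}(0) + m \le x \,\}$; the bound $|\widetilde f^{\,n}(0) - n\theta| < 1$ makes this supremum finite, the extension is non-decreasing, and since its image $\{n\theta + m : n,m \in \Z\}$ is dense in $\R$ the map $\psi$ has no jumps, so it is a continuous surjection commuting with integer translation. Equivariance $\psi(\widetilde f(x)) = \psi(x) + \theta$ holds on the orbit by construction and extends by continuity and density; descending to $S^1$ and identifying $S^1$ with $[0,1]$ in the usual way yields the desired order-preserving surjection $\phi$ with $\phi(f(t)) = \phi(t) + \theta \bmod 1$.

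I expect the main obstacle to be the combinatorial comparison lemma in part (2): this is where the hypothesis that $\theta$ is irrational (hence, via part (1), that $f$ has no periodic points) is genuinely used, and getting the strictness of every inequality right is the delicate point. Once the orbit is known to be order-isomorphic to an orbit of $R_\theta$, the construction of $\phi$ as a monotone extension with dense image, and the verification of its properties, are routine.
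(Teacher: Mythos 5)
The paper does not prove this theorem---it is Poincar\'e's classical result, quoted for background---so there is no in-paper argument to compare against. Your proof is the standard classical one and is correct: the intermediate-value argument on $\widetilde{f}^{\,q}-p$ together with the combinatorial-rotation observation settles part~(1), and for part~(2) the order-isomorphism of the full $\widetilde{f}$-orbit of $0$ (with its integer translates) onto a rotation orbit, followed by monotone extension via suprema, does the job, with irrationality of $\theta$ (via part~(1)) guaranteeing well-definedness on the orbit, strictness of every comparison, and density of the image (hence continuity and surjectivity of the extension). One small point of care in the comparison lemma: after reducing to $\widetilde{f}^{\,k}(0)<j \iff k\theta<j$, it is cleanest to arrange $k\geq 0$ (conjugate by $\widetilde{f}^{-\min(n,n')}$ rather than always by $\widetilde{f}^{-n'}$), since for $k<0$ the iteration $\widetilde{f}^{\,kn}(0)<jn$ yields $\theta\geq j/k$ rather than $\theta\leq j/k$; the final equivalence $k\theta<j$ still comes out correctly once irrationality removes the boundary case, but the sign of the intermediate inequality flips, so as written the sentence is only accurate for positive $k$.
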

In the setting of piecewise linear homeomorphisms of $S^1$, it is natural to wonder if the rationality of the coefficients
used in the definition of the homeomorphism would imply that the rotation number is rational.
Ghys and Sergiescu proved the following result.

\begin{thm} \cite{GhysSergiescu} \label{GhysSergiescu}
If $f \in \PLoS$ maps dyadic rationals to dyadic rationals and has slopes which are powers of $2$,
then the rotation number of $f$ is rational.
\end{thm}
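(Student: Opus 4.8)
The plan is to argue by contradiction, with the engine being a $2$-adic estimate along a dyadic orbit combined with the Denjoy--Koksma inequality. Suppose $\theta:=\rot(f)$ is irrational. By the theorem of Poincar\'e above, $f$ has no periodic point and there is an order-preserving surjection $\phi\colon S^1\to S^1$ with $\phi\circ f=R_\theta\circ\phi$, where $R_\theta$ is rotation by $\theta$. Let $\mu$ be the $f$-invariant Borel probability measure obtained by pulling back Lebesgue measure along $\phi$. Since $f$ has no periodic point, $\mu$ is non-atomic (an atom would have a finite orbit of atoms of equal mass, forcing a periodic point), and $f$ is uniquely ergodic, so the Birkhoff averages of any bounded $\mu$-almost-everywhere continuous function converge uniformly on $S^1$ to its $\mu$-integral.

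Next I would extract the arithmetic. Write the affine pieces of $f$ as $x\mapsto 2^{m_i}x+c_i$; since $f$ preserves $\Z[1/2]$ and the dyadic rationals are dense in each piece, every $c_i$ lies in $\Z[1/2]$. For $y\in\Z[1/2]\setminus\{0\}$ let $\nu(y)\ge 0$ denote the exponent of the denominator of $y$ in lowest terms, put $M=\max_i|m_i|$, and let $C$ be an upper bound for the $\nu(c_i)$. Fix a dyadic point $x^{*}$; it is not periodic, so its forward orbit is an infinite subset of $\Z[1/2]\cap[0,1)$, and since only $2^{Q}$ dyadics in $[0,1)$ satisfy $\nu\le Q$, we get $\nu(f^{n}(x^{*}))\to\infty$. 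The crucial point is that once $\nu(f^{n}(x^{*}))>M+C$ --- which holds for all large $n$ --- there is no $2$-adic cancellation in $f(f^{n}(x^{*}))=2^{m_{i(n)}}f^{n}(x^{*})+c_{i(n)}$, where $i(n)$ indexes the piece containing $f^{n}(x^{*})$: the first summand has $2$-adic valuation $m_{i(n)}-\nu(f^{n}(x^{*}))<-C$, strictly below the valuation $\ge -C$ of the second, so
\[
\nu\bigl(f^{\,n+1}(x^{*})\bigr)=\nu\bigl(f^{n}(x^{*})\bigr)-m_{i(n)}.
\]
As $m_{i(n)}=\log_2 f'(f^{n}(x^{*}))$, summing this telescoping recursion gives, for all large $n$,
\[
\log_2(f^{n})'(x^{*})=-\,\nu\bigl(f^{n}(x^{*})\bigr)+O(1)\;\xrightarrow{\,n\to\infty\,}\;-\infty.
\]

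To finish, I would bring in two classical facts. First, $\int_{S^1}\log_2 f'\,d\mu=0$: for any $g\in\PLoS$, concavity of $\log_2$ together with $\int_{S^1}g'\,dx=1$ gives $\int_{S^1}\log_2 g'\,dx\le 0$; applying this to $g=f^{n}$, dividing by $n$, and using unique ergodicity yields $\int_{S^1}\log_2 f'\,d\mu\le 0$, and the same reasoning applied to $f^{-1}$ (which satisfies the same hypotheses) gives the reverse inequality. Second, $\log_2 f'$ is a step function with finitely many jumps, hence of bounded variation, and it has $\mu$-mean $0$; the Denjoy--Koksma inequality therefore applies: writing $q_1<q_2<\cdots$ for the denominators of the convergents of $\theta$,
\[
\bigl|\log_2(f^{q_n})'(x^{*})\bigr|=\Bigl|\,\sum_{k=0}^{q_n-1}\log_2 f'\bigl(f^{k}(x^{*})\bigr)\Bigr|\le \mathrm{Var}\bigl(\log_2 f'\bigr)
\]
for every $n$. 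Since $q_n\to\infty$, this contradicts the displayed divergence, so $\theta$ must be rational.

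The step I expect to need the most care --- and the one where the hypothesis that the slopes are powers of $2$ is genuinely used --- is the valuation recursion $\nu(f^{n+1}(x^{*}))=\nu(f^{n}(x^{*}))-m_{i(n)}$: because the slope group has rank one, ``$\nu$ large'' is an unambiguous condition that is eventually permanent, so no cancellation can ever occur in $2^{m}y+c$ once $\nu(y)$ is large. For slopes in $(2/3)^{\Z}$ one would have to track two $p$-adic valuations, the orbit can keep one of them bounded, cancellations reappear at that prime, and the clean identity $\log_2(f^{n})'(x^{*})=-\nu(f^{n}(x^{*}))+O(1)$ breaks down --- consistently with the construction carried out in the rest of this paper. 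The remaining ingredients --- non-atomicity and unique ergodicity of $\mu$, the identity $\int\log_2 f'\,d\mu=0$, and the Denjoy--Koksma inequality for circle homeomorphisms relative to their unique invariant measure --- are classical.
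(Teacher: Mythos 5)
The paper does not prove Theorem~\ref{GhysSergiescu}; the statement is quoted from \cite{GhysSergiescu}, and the surrounding discussion only notes that Calegari, Kleptsyn, and Liousse later gave ``more constructive'' proofs of the stronger Theorem~\ref{CKL}. There is therefore no in-paper proof to compare against, but your argument is essentially the Denjoy--Koksma argument of \cite{Calegari} specialized to base $2$, and it is correct.

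All the main steps check out: with $M=\max_i|m_i|$ and $C\ge\nu(c_i)$, once $\nu(f^n(x^*))>M+C$ one has $v_2\bigl(2^{m_{i(n)}}f^n(x^*)\bigr)=m_{i(n)}-\nu(f^n(x^*))<-C\le v_2(c_{i(n)})$, so no $2$-adic cancellation occurs and the recursion $\nu(f^{n+1}(x^*))=\nu(f^n(x^*))-m_{i(n)}$ telescopes to $\log_2(f^n)'(x^*)=-\nu(f^n(x^*))+O(1)\to-\infty$; meanwhile Jensen applied to $f^{\pm n}$ together with unique ergodicity and non-atomicity of $\mu$ gives $\int\log_2 f'\,d\mu=0$, and Denjoy--Koksma along the continued-fraction denominators $q_k$ bounds $|\log_2(f^{q_k})'(x^*)|$ by $\mathrm{Var}(\log_2 f')$, a contradiction. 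Two details deserve attention in a full write-up. First, the orbit of $x^*$ may pass through a breakpoint of $f$ (breakpoints need not be dyadic, but they can be), making $(f^n)'(x^*)$ undefined; since a non-periodic orbit meets each breakpoint at most once, one should work throughout with the right derivative $D^+f$, which has the same variation, the same $\mu$-integral, and satisfies the same valuation recursion with the right slope. Second, the Denjoy--Koksma inequality you invoke is the version for an arbitrary circle homeomorphism with irrational rotation number relative to its unique (and here typically singular) invariant measure; this is classical (see \cite{Herman} or \cite{KimKoberda}) but deserves an explicit citation, since the commonly quoted version is for rigid rotations. Your closing remark correctly diagnoses why the method fails for slopes in $(2/3)^{\Z}$ --- the $2$- and $3$-adic valuations are coupled with opposite signs, and a cancellation at one prime can reset that valuation --- which is exactly the loophole the rest of this paper exploits.
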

\noindent
Here we recall that $\PLoS$ is the group of all piecewise linear orientation preserving homeomorphisms of the circle $S^1 = \R/\Z$,
which we will identify with $[0,1)$, equipped with a suitable topology. 

Boshernitzan \cite{dense_orbits} showed that if $0 < a,b$ and $a+b < 1$, then the homeomorphism
\[
\phi_{a,b} (t) :=
\begin{cases}
\frac{1-b}{a} t + b & \textrm{ if } 0 \leq t < a \\[3pt]
\frac{b}{1-a} (t-a) & \textrm{ if } a \leq t < 1
\end{cases}
\]
has rotation number $\frac{\log k_1}{\log k_1 - \log k_2}$ where $k_1 = \frac{1-b}{a}$ and $k_2 = \frac{b}{1-a}$.
For instance if $1 < p < q$ and $a = \frac{q-p}{p(q-1)}$ and $b = \frac{p-1}{q-1}$, then the rotation number of $\phi_{a,b}$ is $\frac{\log p}{\log q}$.
See also \cite{Liousse_Thompson-Stein}.

D. Calegari \cite{Calegari}, V. Kleptsyn\footnote{
According to email communication with Danny Calegari and Michele Triestino (in both cases on September 22, 2022),
Victor Kleptsyn obtained the result independently but did not publish the result or circulate a written proof.
}, and I. Liousse \cite{Liousse} (see also \cite{Liousse_Thompson-Stein}) each independently gave a more constructive proof of \cite{GhysSergiescu} and stated Theorem \ref{GhysSergiescu}
in the following more general form:
\begin{thm} \label{CKL}
If $f \in \PLoS$ maps rationals to rationals and has slopes which are powers of a single integer, then the
rotation number of $f$ is rational. 
\end{thm}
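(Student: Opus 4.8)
The plan is to argue by contradiction. Suppose the slopes of $f$ are powers of a fixed integer $n \ge 2$ but $\theta := \rot(f)$ is irrational. Since $f$ maps $\Q$ to $\Q$, on each interval of affinity $f$ has the form $t \mapsto n^k t + c$ with $c \in \Q$ (evaluate at a rational point of the interval), and matching slopes at a breakpoint then forces that breakpoint to be rational. Hence $L := \log_n f'$ is an integer-valued step function of finite total variation $V$, so $\log f'$ has bounded variation; by the Denjoy theory (valid for homeomorphisms with $\log f'$ of bounded variation) $f$ is topologically conjugate to $R_\theta$, and in particular it is minimal and uniquely ergodic, with invariant probability measure $\mu$.

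The first step is a conservation law: $\int_0^1 L\, d\mu = 0$. If instead $c := \int_0^1 L\, d\mu$ were positive, then the Denjoy--Koksma inequality, applied to the bounded-variation function $L$ and to the continued-fraction denominators $q_m$ of $\theta$, would give $\log_n (f^{q_m})'(x) = \sum_{j=0}^{q_m-1} L(f^j(x)) \ge q_m c - V$ for every $x$; for $m$ with $q_m c > V$ this makes $(f^{q_m})'(x) > 1$ everywhere, contradicting $\int_0^1 (f^{q_m})'(x)\,dx = 1$, which holds because $f^{q_m}$ has degree one. The case $c < 0$ is symmetric. With $c = 0$, Denjoy--Koksma moreover bounds the Birkhoff sums $\sum_{j=0}^{q_m-1} L(f^j(x))$ in absolute value by $V$, uniformly in $x$ and $m$.

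Next I would renormalize. Let $I_m$ be the $m$-th Rauzy--Veech interval (bounded by a fixed reference point $x_0$ and $f^{q_m}(x_0)$), let $F_m$ be the first-return map of $f$ to $I_m$, and let $f_m$ be $F_m$ rescaled affinely onto $[0,1)$. Choosing $x_0 \in \Q$, its whole $f$-orbit is rational, so $|I_m| \in \Q$ and the rescaling has rational coefficients; since $f_m$ is, piecewise, an affinely rescaled iterate of $f$, it is again piecewise linear, maps $\Q$ to $\Q$, has slopes that are powers of $n$, and its rotation number (obtained from $\theta$ by finitely many continued-fraction steps) is again irrational. In the two-column Rokhlin--Kakutani tower over $I_m$ (equivalently, the three-distance structure of $R_\theta$), each breakpoint of $f$ lies in exactly one level of one column and so contributes exactly one breakpoint to $F_m$, carrying the same slope jump; thus every $f_m$ has at most $(\#\text{breakpoints of }f)+1$ breakpoints and log-slope variation at most $V$. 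The two return times are $q_m$ and $q_m+q_{m-1}$, so on each interval of affinity $\log_n f_m'$ is a Birkhoff sum of $L$ over at most $q_m+q_{m-1}$ steps, hence (by the conservation law and Denjoy--Koksma) at most $2V$ in absolute value. So the $f_m$ have uniformly bounded complexity: at most $M$ breakpoints and all slopes $n^k$ with $|k|\le 2V$, independently of $m$.

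The hard part will be to upgrade bounded complexity to outright finiteness: the only parameters of $f_m$ not yet controlled are the denominators of its rational breakpoints, and bounding these requires understanding how the Rauzy--Veech rescaling interacts with the integer-slope structure. This is exactly where the hypothesis that the slopes are powers of a \emph{single} integer is indispensable --- and the point at which the slope-$\tfrac23$ example of the present paper evades the conclusion. Granting finiteness, some $f_{m_1}=f_{m_2}$ with $m_1<m_2$, so $f_{m_1}$ is periodic under renormalization (and $\theta$ is a quadratic irrational); the proof then concludes by showing that a piecewise linear circle homeomorphism with slopes powers of a single integer which is periodic under renormalization has rational rotation number. The periodicity forces a linear relation among the logarithms of the slopes encountered around the renormalization cycle --- in the spirit of Boshernitzan's formula $\rot(\phi_{a,b}) = \frac{\log k_1}{\log k_1-\log k_2}$ --- and, those logarithms being integer multiples of $\log n$, the rotation number is forced to be rational, contradicting the irrationality of $\rot(f_{m_1})$.
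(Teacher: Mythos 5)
This theorem is quoted in the paper from the literature (Calegari, Kleptsyn, Liousse), so there is no in-paper proof to compare against; judged on its own merits, your write-up is a program with the decisive steps missing. The preliminaries are correct and standard: rationality of the offsets and breakpoints, Denjoy's theorem for bounded variation of $\log f'$, the conservation law $\int \log f'\,d\mu=0$ via Denjoy--Koksma, and the uniform bounds on the number of breakpoints and on the slope exponents of the renormalized maps (the latter two are essentially Propositions \ref{breakpoints} and \ref{slope_prop} of this paper). But from there you explicitly defer the crux: nothing is offered for the passage from ``uniformly bounded complexity'' to finiteness of $\{f_m\}$. The uncontrolled parameters are the denominators of the (rational) breakpoints of the $f_m$, and bounding these is precisely the ``denominator bounds'' content of the actual published proofs; it is not a step one can grant. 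Indeed, eventual periodicity of this renormalization scheme under Calegari-type hypotheses is exactly what this paper can only state as Conjecture \ref{quad_irrational}, so your ``hard part'' is at least as hard as everything that is genuinely at stake, and the known proofs of the theorem do not go through finiteness of the renormalization orbit at all, but through arithmetic (valuation) control of orbits of rational points.

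The concluding step is worse than incomplete: as sketched, its mechanism proves a false statement. You claim that periodicity under renormalization together with ``the logarithms of the slopes being integer multiples of $\log n$'' forces a Boshernitzan-type relation making the rotation number rational. That reasoning uses only that the log-slopes lie in a cyclic subgroup of $\R$, which holds equally for slopes that are powers of $\frac{2}{3}$ (integer multiples of $\log\frac{2}{3}$). Theorem \ref{main} of this paper is exactly a map with slopes $\frac{3}{2},\frac{2}{3},1$ (all powers of $\frac{3}{2}$) which is periodic under renormalization and has rotation number $\sqrt{2}-1$; so no relation of the form ``rotation number $=$ ratio of integer combinations of log-slopes'' can follow from periodicity alone, and your sketched argument would apply verbatim to this counterexample. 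Relatedly, your remark that the $\frac{2}{3}$ example ``evades the conclusion'' at the finiteness step is backwards: that example satisfies finiteness (it is periodic under renormalization) and evades precisely at your final step, which shows that any correct version of that step must use integrality of $n$ arithmetically (through denominators or $p$-adic valuations), i.e., the very input your proposal never supplies.
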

\noindent
Calegari asked if Theorem \ref{CKL} remained true if ``single integer'' was replaced by
``single rational'' \cite[4.6]{Calegari}.

We give a negative answer to this question.

\begin{thm} \label{main}
The rotation number of 
\[
f (t) :=
\begin{cases}
\frac{3}{2} t + \frac{3}{8} & \textrm{ if } 0 \leq t < \frac{1}{4} \\[3pt]
\frac{2}{3} t + \frac{7}{12} & \textrm{ if } \frac{1}{4} \leq t < \frac{5}{8} \\[3pt]
t - \frac{5}{8} & \textrm{ if } \frac{5}{8} \leq t < 1
\end{cases}
\]
is $\sqrt{2}-1$.
\end{thm}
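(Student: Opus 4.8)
The plan is to show that $f$ is \emph{periodic under renormalization}: there is a renormalization operator $\mathcal{R}$, defined on the class of piecewise linear circle homeomorphisms whose slopes are powers of $\tfrac{2}{3}$ and whose breakpoints are rational, together with an integer $k\geq 1$ such that $\mathcal{R}^k f = f$. Granting this, the rotation number is forced by the classical dictionary between renormalization of circle homeomorphisms and continued fractions, together with the fact that $\sqrt2-1=[0;2,2,2,\dots]$ is the unique number in $(0,1)$ satisfying $\theta=\tfrac{1}{2+\theta}$, equivalently $\theta^2+2\theta-1=0$.

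Concretely, I would define $\mathcal{R}g$ (for $g$ in the above class) by choosing a distinguished subinterval $J=[0,\beta)$ with $\beta$ a suitable point of the forward orbit of $0$, forming the first-return map $g_J\colon J\to J$, and conjugating by the affine bijection $\psi\colon J\to[0,1)$, so that $\mathcal{R}g=\psi\circ g_J\circ\psi^{-1}$; this is the relevant form of Rauzy--Veech induction in this setting. Two features make it usable. First, a slope of $g_J$ at a point is a product of slopes of $g$ along the corresponding orbit segment, hence again a power of $\tfrac{2}{3}$, and conjugating by the affine $\psi$ does not change slopes (the two affine factors cancel in the chain rule), so $\mathcal{R}$ preserves our class; and since $\beta$ and the data of $g$ are rational, all breakpoints stay rational and the $\mathcal{R}$-orbit of $f$ is finitely computable. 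Second, $\psi$ is a homeomorphism of circles conjugating $g_J$ to $\mathcal{R}g$, so $\rot(\mathcal{R}g)=\rot(g_J)$, which in turn is obtained from $\rot(g)$ by a Gauss-type map that consumes one block of partial quotients of the continued fraction expansion of $\rot(g)$.

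The core of the proof is the explicit finite computation verifying periodicity: iterating $\mathcal{R}$ starting from $f$, one checks that after $k$ steps the number of linearity intervals has not grown, the combinatorial arrangement of the exchanged intervals has returned to its original type, and the breakpoints (rationals whose denominators are products of powers of $2$ and $3$) and slopes agree with those of $f$ after rescaling, so that $\mathcal{R}^k f=f$. I expect this bookkeeping to be the main obstacle: one has to organize the orbit data so that the Rauzy path genuinely closes into a loop rather than merely appearing to stabilize, and keep the combinatorics controlled at every step. As a byproduct, since $\mathcal{R}^k f = f$ the operator $\mathcal{R}$ can be applied to $f$ indefinitely, which forces $f$ to have no periodic point and hence $\rot(f)$ to be irrational.

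Finally, reading off the partial quotients $a_1,\dots,a_m$ recorded along the Rauzy path traversed in one period and combining $\rot(\mathcal{R}^k f)=\rot(f)$ with the digit-consuming identity gives
\[
\rot(f)=[0;a_1,\dots,a_m,\rot(f)],
\]
a fixed-point equation for a periodic continued fraction whose solution in $(0,1)$ is a quadratic irrational. In the case at hand the partial quotients recorded in one period are all equal to $2$, so the equation collapses to $\rot(f)=\tfrac{1}{2+\rot(f)}$, whose unique solution in $(0,1)$ is $\sqrt2-1$, as claimed.
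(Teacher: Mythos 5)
Your overall strategy is exactly the one the paper uses: define a Rauzy--Veech-type renormalization operator $\mathcal{R}$ (in the paper's notation, $f\mapsto f^*$, obtained by taking the first-return map of $f^{-1}$ to $[0,f(0))$ and rescaling), observe that it preserves the class of PL circle maps with rational breakpoints and slopes in $\left(\tfrac{2}{3}\right)^{\Z}$, use the recursion $\rot(f)=\frac{1}{m_f+\rot(f^*)}$ to turn periodicity of the renormalization orbit into a periodic continued fraction, and note that periodicity of $\mathcal{R}$ on a non-identity map forces irrationality of $\rot(f)$. All of those observations are correct, and the identification $\sqrt{2}-1=[0;\overline{2}]$ is the right target.

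However, you have described a plan rather than given a proof. The heart of the theorem is the explicit verification that the renormalization orbit of this particular $f$ really does close up with the right partial quotients, and you leave that entirely to the reader, even flagging it yourself as ``the main obstacle.'' You assert, without computing anything, that ``in the case at hand the partial quotients recorded in one period are all equal to $2$,'' but that is precisely what needs to be checked; there is no shortcut around it. The paper does this: it computes the backward orbit of $[0,\tfrac{1}{8})$ under $f$ to show $m_f=2$, writes out $f^*$ explicitly as a three-piece PL map on $[0,1)$ with slopes $\tfrac{2}{3}$, $\tfrac{3}{2}$, $1$, and then verifies $m_{f^*}=2$ and $f^{**}=f$, at which point the equation $\rot(f)=\frac{1}{2+\rot(f)}$ (equivalently $\rot(f)=[0;\overline{2}]=\sqrt{2}-1$) follows from Proposition~\ref{rot_rec}. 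Until you supply the analogue of that computation---concretely, exhibit $f^*$, verify $f^{**}=f$, and read off $m_f=m_{f^*}=2$---the argument does not establish the theorem. A secondary, smaller point: you should pin down the renormalization operator precisely (which interval, first return of $f$ or of $f^{-1}$, how the partial quotient is extracted), since the clean statement $\rot(f)=\frac{1}{m_f+\rot(f^*)}$ depends on those conventions, and different choices shift or reverse the continued fraction digits.
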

\noindent
This will be achieved by showing that there is a \emph{renormalization procedure} which is periodic when applied to $f$.
To our knowledge, Theorem \ref{main} provides the first example of an
element of $\PLoS$ which maps rationals to rationals and whose rotation number is an irrational algebraic number.
The method of using periodic behavior of a renormalization operation to calculate a rotation number has its roots
in \cite{Lanford}
and the relationship between return maps for circle homeomorphisms
  and the continued fraction expansion of their rotation
numbers is at this point well known.

Computer experimentation suggests that our renomalization procedure is \emph{always} eventually periodic or terminating when applied to
a homeomorphism as in Calegari's question.
Thus we make the following conjecture.

\begin{conj} \label{quad_irrational}
If $f \in \PLoS$ maps $\Q$ to $\Q$ and has slopes which are powers of a single rational, then the rotation number
of $f$ is algebraic and has degree at most~$2$.
\end{conj}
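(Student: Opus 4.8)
The plan is to recast the conjecture through Lagrange's theorem: a real number is algebraic of degree at most $2$ if and only if its continued fraction expansion is finite or eventually periodic. Our renormalization procedure is a form of Rauzy--Veech induction, so passing from a homeomorphism $g$ to the rescaled first--return map of $g$ on a suitable subinterval performs one or more steps of the continued fraction algorithm applied to $\rot(g)$. Hence the conjecture is equivalent to the statement that, starting from $f = f_0$ as in the hypothesis, the renormalization sequence $f_0, f_1, f_2, \dots$ either terminates --- some $f_n$ acquires a periodic point, whence $\rot(f_n)$, and therefore $\rot(f)$, is rational by Poincar\'e's theorem --- or is eventually periodic up to the affine rescalings used to define it. The strategy is to prove the stronger fact that the $f_n$ are confined to a finite set.

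\textbf{The state space is preserved by renormalization.}
First I would simplify the hypothesis: if $f \in \PLoS$ maps $\Q$ to $\Q$ and has rational slopes, then on each linear piece $f$ has the form $t \mapsto at+b$ with $a,b \in \Q$ (evaluate at a rational point of that piece), so each breakpoint, being the solution of $a_1 t + b_1 = a_2 t + b_2$ with all coefficients rational, is itself rational. Thus $f$ is encoded by a partition of $[0,1)$ into finitely many intervals with rational endpoints, together with an affine map $t \mapsto (p/q)^{k_i} t + b_i$ on each piece, where $k_i \in \Z$ and $b_i \in \Q$. Next I would check that the renormalization operator preserves this class: a first--return map of a PL map is PL; its slopes are products of slopes of the original map, hence remain powers of $p/q$; it maps $\Q$ to $\Q$; and the affine rescaling (conjugation by an affine map with rational coefficients, of denominator a power of $q$) leaves the slopes untouched and keeps all the remaining data rational. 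So every $f_n$ is again described by finite rational data of the same shape.

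\textbf{What must be bounded.}
Let $N_n$ be the number of linear pieces of $f_n$, let $K_n = \max_i |k_i|$ be the largest exponent of $p/q$ occurring in $f_n$, and let $D_n$ be the least common denominator of all breakpoints and intercepts of $f_n$. If one proves that $N_n$, $K_n$, and $D_n$ are bounded uniformly in $n$, then the $f_n$ range over a finite set, the sequence is eventually periodic, and the conjecture follows by the reduction above. Here partial progress is realistic: the exponents of $f_n$ are sums of exponents of $f$ along return orbits, and the constraint that $\rot(f_n)$ stays in $(0,1)$ together with the cocycle structure of renormalization should bound $K_n$; and $N_n$ is controlled provided no genuinely new breakpoints are created at each step.

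\textbf{The main obstacle.}
The crux --- and the reason this is stated as a conjecture rather than a theorem --- is the uniform bound on the denominators $D_n$ (and on the piece count $N_n$). A renormalization step can create a new breakpoint whenever a breakpoint of $f_n$ falls in the part of the subinterval that is discarded, and iterating a rescaling whose denominator is a power of $q$ tends to multiply denominators, a priori causing $D_{n+1}$ to grow like a power of $D_n$. To rule this out one would want to show that every breakpoint ever produced lies in a \emph{fixed} finitely generated subgroup of $(\Q,+)$ --- equivalently, that the orbit of the finite breakpoint set under the finitely many affine pieces and the renormalization dynamics is finite. A natural line of attack is to refine the contraction estimates behind the proofs of Theorem~\ref{CKL}: there the slopes being powers of a single integer forces honest contraction and the procedure terminates, whereas in the present regime, where the reciprocal slopes $p/q$ and $q/p$ both occur, one expects instead a bounded--distortion or ``bounded type'' phenomenon which keeps the arithmetic data finite without forcing termination. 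Converting this heuristic into a proof, or locating the obstruction to it, is precisely the content of Conjecture~\ref{quad_irrational}; the periodic behavior of the example in Theorem~\ref{main} and the computer experiments recorded below are the current evidence in its favor.
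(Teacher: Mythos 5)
The statement you were asked to prove is stated in the paper as a conjecture, and the paper offers no proof of it; your proposal, by your own admission in the final paragraph, does not supply one either. What you have written is a reduction plus a strategy: via Lagrange's theorem, it suffices to show the renormalization sequence $f^{*0}, f^{*1}, f^{*2}, \dots$ either terminates (rational rotation number) or is eventually periodic, and for that it would suffice to show the maps $f^{*n}$ lie in a finite set. This is exactly the route the paper itself takes in its ``evidence'' section: the authors explicitly conjecture that $f^{*k}=f^{*l}$ for some $k<l$, note this is equivalent to finiteness of $\{f^{*n} : n \in \N\}$, and prove the two partial facts you flag as ``realistic partial progress'' --- Proposition \ref{breakpoints} ($|B_{f^*}| \le |B_f|$, so your $N_n$ is bounded) and Proposition \ref{slope_prop} (slopes of all $f^{*k}$ lie in a fixed compact set away from $0$ and $\infty$, so your $K_n$ is bounded). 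So your reduction is sound and your accounting of what is known is accurate, but it reproduces the paper's framework rather than advancing it.

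The genuine gap is the one you name yourself: a uniform bound on the arithmetic complexity (your $D_n$, the common denominator of breakpoints and intercepts) of the renormalized maps. Nothing in your proposal, and nothing in the paper, rules out the denominators growing without bound under repeated rescaling by the rational return time data; bounded slopes and a bounded number of breakpoints do not by themselves confine the $f^{*n}$ to a finite set, since the breakpoints and intercepts range over an infinite set of rationals even with those constraints. Your suggested line of attack (``refine the contraction estimates behind Theorem \ref{CKL}'' to get a bounded-distortion phenomenon) is a heuristic, not an argument: in the integer-slope case the proof of Theorem \ref{CKL} exploits a genuine arithmetic obstruction that forces rationality, and it is precisely the failure of that mechanism when both $p/q$ and $q/p$ occur as slopes that makes the present question open. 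So the proposal should be assessed as a correct restatement of the problem and of the known reductions, with the central claim --- eventual periodicity, equivalently finiteness of the renormalization orbit --- left unproved.
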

\noindent
Toward the end of this article,
we will collect some evidence --- both theoretical and computational --- in support of this conjecture.

\subsection*{An algorithm for computing rotation numbers}

We will now describe a variation of a well-known
algorithm for computing rotation numbers of homeomorphisms of $S^1$ (see e.g. \cite{Bruin})
based on the concept of \emph{renormalization of circle homeomorphisms} which originated in \cite{FKS} \cite{ROSS}.
While our algorithm's description is somewhat more terse than the standard one, its properties do not seem to differ in any essential way.

Suppose that $f:[0,1) \to [0,1)$ is a homeomorphism of $S^1$.
If $f$ has a fixed point, define $f^*$ to be the identity function and $m_f:=\infty$. 
If $f$ does not have a fixed point, 
set $r:=f(0)$, and for each $t\in [0,r)$ let $\ell(t)>0$ be minimal such that $0\leq f^{-\ell(t)}(t) < r$. Set $m_f:= \ell(0)$, and define
\[
f^*(t) = \frac{1}{r} f^{-\ell(rt)}(rt).
\]
Note that except for using $f^{-1}$ instead of $f$ when computing the return map, this
is (an accelleration of) the Rauzy-Veech renormalization for generalized interval exchange maps; see \cite[\S3]{CunhaSmania}.
Observe that $\ell(t) = m_f$ for $0 \leq t < f^{m_f}(r)$ and $\ell(t) = m_f+1 $ for $f^{m_f}(r) \leq t < r$.    
Furthermore, if $f^*$ has no fixed points, then $f^*(t) > t$ if and only if $\ell(rt) = m_f$.

\begin{prop} \label{rot_rec}
For any homeomorphism $f$ of $S^1$ with no fixed points:
\begin{enumerate}

\item \label{f*_fp} 
if $f^*$ has a fixed point, then $\rot(f) = 1/p$ where $p$ is the period of any periodic point of $f$;

\item \label{f*_no_fp}
if $f^*$ does not have a fixed point, then
$$\rot(f) = \frac{1}{m_f + \rot(f^*)}.$$

\end{enumerate}
Thus if $\rot(f)$ has a nonterminating continued fraction expansion $[0;a_1,a_2,\ldots]$,
then $m_f=a_1$ and $\rot(f^*)$ has continued fraction expansion $[0;a_2,a_3,\ldots]$.
\end{prop}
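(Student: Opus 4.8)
The plan is to identify $f^{*}$, up to an affine rescaling, with the first-return map of $f^{-1}$ to the arc $J := [0,r)$, and then to analyze that map: for $\rot(f)$ rational by a direct counting argument on the periodic orbit, and for $\rot(f)$ irrational by transporting everything to the rigid rotation through a Poincar\'e semi-conjugacy. Fix the lift $\widetilde f$ with $\widetilde f(0) = r \in (0,1)$; since $f$ is fixed-point free, $\widetilde f > \id$ and $\theta := \rot(f) \in (0,1)$. The map $G\colon [0,r)\to[0,r)$, $G(t) = f^{-\ell(t)}(t)$, is the first-return map of $f^{-1}$ to $J$, viewed as a homeomorphism of the circle obtained by gluing $0$ to $r$; the rescaling $t\mapsto rt$ conjugates $f^{*}$ to $G$, so $\rot(f^{*}) = \rot(G)$ and $m_f = \ell(0)$.

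I would first prove part~(\ref{f*_fp}) together with the dichotomy that $f^{*}$ has a fixed point if and only if $\rot(f) = 1/n$ for some integer $n\ge 2$. The key elementary fact is that if $f$ has a periodic orbit $P$ then $\lvert P\cap J\rvert = s$, where $\rot(f) = s/q$ in lowest terms: the count $\lvert P\cap[x,f(x))\rvert$ is independent of $x$ (moving the left endpoint past a point $y\in P$ deletes $y$, while the right endpoint simultaneously adds $f(y)$), and its integral over $[0,1)$ equals $\sum_{y\in P}(\text{length of the arc from } f^{-1}(y)\text{ to }y)$, which telescopes along the orbit to the winding number $s$. Now suppose $f^{*}$, equivalently $G$, has a fixed point $t_0\in J$. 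Then $t_0$ is $f$-periodic, hence of period $q$ (all periodic points of a circle homeomorphism with rotation number $s/q$ have period $q$), and its first return time to $J$ is a multiple of $q$ and at most $q$, hence equal to $q$; so none of $f^{-1}(t_0),\dots,f^{-(q-1)}(t_0)$ lies in $J$, and as these together with $t_0$ exhaust $P$ we get $\lvert P\cap J\rvert = 1$, forcing $s = 1$. Thus $\rot(f) = 1/q = 1/p$ with $p$ the common period of the periodic points, which is part~(\ref{f*_fp}). Conversely, if $\rot(f) = 1/n$ then $\lvert P\cap J\rvert = 1$, so its single point $t_0$ satisfies $G(t_0)\in P\cap J = \{t_0\}$ and $f^{*}$ has a fixed point.

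For part~(\ref{f*_no_fp}) and the final assertion, assume $f^{*}$ has no fixed point; by the dichotomy $1/\theta\notin\Z$. Normalize Poincar\'e's semi-conjugacy (lift $\widetilde\phi$ with $\widetilde\phi(0) = 0$ and $\widetilde\phi\circ\widetilde f = \widetilde\phi + \theta$), so that $\widetilde\phi(r) = \theta$ and $\widetilde\phi$ carries $J$ onto $[0,\theta)$ and descends to a monotone degree-one circle map intertwining $G$ with the first-return map $\widehat G$ of $R_{-\theta}$ to $[0,\theta)$; hence $\rot(f^{*}) = \rot(\widehat G)$, and $m_f = \ell(0)$ equals the first-return index at $0$ for $R_{-\theta}$, which one computes to be $\lfloor 1/\theta\rfloor$. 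A direct computation with $R_{-\theta}$ also shows $\widehat G$ is the rotation of $\R/\theta\Z$ by $1-\lfloor1/\theta\rfloor\theta$, so $\rot(f^{*}) = (1-\lfloor1/\theta\rfloor\theta)/\theta = \{1/\theta\}$; therefore $m_f + \rot(f^{*}) = \lfloor1/\theta\rfloor + \{1/\theta\} = 1/\theta$, i.e.\ $\rot(f) = 1/(m_f+\rot(f^{*}))$. If moreover $\rot(f) = [0;a_1,a_2,\dots]$ is nonterminating, then $\theta$ is irrational, $\rot(f^{*}) = \{1/\theta\}\in(0,1)$, and reading off $1/\theta = m_f + \rot(f^{*})$ with $m_f\in\Z$ forces $m_f = a_1$ and $\rot(f^{*}) = [0;a_2,a_3,\dots]$. (For $\rot(f)$ rational and $\neq 1/n$, i.e.\ $=s/q$ with $s\ge 2$, the same formula is obtained combinatorially: $G$ cyclically permutes the $s$ points of $P\cap J$ with combinatorial rotation number $(q-m_f s)/s$.)

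I expect the main obstacle to be the intertwining claim in the previous paragraph: when $f$ is not minimal, $\widetilde\phi$ is merely monotone, and if a complementary interval of the minimal set abuts $0$ or $r$ then the first-return time functions of $f^{-1}$ on $J$ and of $R_{-\theta}$ on $[0,\theta)$ can fail to agree at the corresponding gap endpoints. The cleanest way around this is to run the comparison on $\R/r\Z$ directly, using that the only break point of $\ell$ in $(0,r)$ is $f^{m_f}(r)$, which $\widetilde\phi$ sends to the break point $(\lfloor1/\theta\rfloor+1)\theta - 1$ of $\widehat G$ (modulo the gluing $\theta\sim 0$), so that a short two-point case analysis identifies $G$ and $\widehat G$ up to the gap-collapsing of $\widetilde\phi$, hence up to rotation number; alternatively one proves the irrational case first for minimal $f$, where $\widetilde\phi$ is a homeomorphism and the argument is immediate, and then removes that hypothesis by approximation within the given rotation number.
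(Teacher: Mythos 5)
Your proof is essentially correct and is considerably more complete than the paper's, which only verifies part~(\ref{f*_no_fp}) in the rational case and explicitly ``leaves the other cases as an exercise.'' For that rational case your route and the paper's converge: both amount to watching the periodic orbit $P$, using $|P\cap[0,r)|$ and the period, and extracting the relation $1 = m_f\rot(f) + \rot(f)\rot(f^*)$ from the Euclidean division of the period by $|P\cap J|$. The paper instead fixes the minimal periodic point $t\in[0,1)$, verifies $t<f(0)$ and $\ell(t)=m_f$, and then sets $p=m_f q+r$; your combinatorial assertion that $G$ permutes $P\cap J$ with rotation number $(q-m_f s)/s$ is the same fact. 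What you add is genuinely different for the two pieces the paper omits: for~(\ref{f*_fp}) your sliding/integration argument showing $|P\cap[x,f(x))|$ is a constant equal to the numerator of $\rot(f)$ is a clean, self-contained lemma that also gives the converse direction (characterizing when $f^*$ has a fixed point), and for the irrational case your Poincar\'e semi-conjugacy computation is the natural conceptual proof, reducing to the explicit return map of $R_{-\theta}$ on $[0,\theta)$ and the identity $1/\theta = \lfloor 1/\theta\rfloor + \{1/\theta\}$. The one spot you should tighten is the workaround for non-minimal $f$: the first fix (matching the unique breakpoint $f^{m_f}(r)$ of $\ell$ with the breakpoint $(\lfloor 1/\theta\rfloor+1)\theta-1$ of $\widehat G$, and checking that $\phi|_J$ is a monotone degree-one map intertwining $G$ with $\widehat G$, which suffices to transfer the rotation number even when $\phi$ is not injective) is sound and should be written out; the second ``approximation within the given rotation number'' fix is much more delicate than you make it sound, since producing a $C^0$-approximating sequence of \emph{minimal} homeomorphisms with the \emph{same} irrational rotation number while controlling the return-time structure is not straightforward, and I would drop it in favor of carrying out the first.
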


\begin{proof}
We will only verify (\ref{f*_no_fp}) when $\rot(f)$ is rational; the other cases are
  left as an exercise.
By our assumption, $f$ has a periodic point with some period $p$.
Since all periodic points of $f$ must have period $p$ and since the solutions to $f^p(t) = t$ form a closed set
there is a minimum $t \in [0,1)$ which is a periodic point for $f$. 
Observe that any periodic orbit must intersect $[0,f(0))$, and hence $t<f(0)$.
Notice also that since $f^*$ does not have any fixed points and $t$ is minimized, $t<f^{-\ell(t)}(t)$ and therefore $\ell(t)=m_f$.  
Consequently if 
\[
p := |\{f^{-k}(t) : k \in \Z\}|
\qquad \qquad
q := |\{f^{-k}(t) : 0 \leq f^{-k}(t) < f(0) \}| 
\]
then $p = m_f q + r$ for some $0 \leq r < q$.
Moreover, 
$$r = |\{f^{-k}(t) : 0 \leq f^{-k}(t) < f^{-m_f}(t)\}|.$$
It follows that $\rot(f) = \frac{q}{p}$ and $\rot(f^*) = \frac{r}{q}$.
Dividing $p = m_f q + r$ by $p$ and substituting, we obtain
$1 = m_f \rot(f) + \rot(f) \rot(f^*)$.
Solving for $\rot(f)$, we obtain the desired equality.
\end{proof}

Notice that the previous argument shows that if $\rot(f)$ is rational and $p,q,r$ are as in the proof,
then $2r < q+r \leq p$.
Since $f^{**}$ has a periodic point of order $r$,
it follows that iteratively applying the operation $*$ to an $f$ with
rational rotation number $\frac{q}{p}$ will terminate with the identity map in at most $2 \log_2 p$ steps.
Of course we typically do not know the rotation number in advance of running the algorithm.

It is worth noting that (\ref{f*_no_fp}) of Proposition \ref{rot_rec} is not true in general if we drop
the requirement that $f^*$ has no fixed points---i.e. the $p$ in (\ref{f*_fp}) need not be $m_f$ but rather is $\ell(t)$ where $t \in [0,f(0))$
is a periodic point
of $f$ (as noted above, $m_f \leq \ell(t) \leq m_f + 1$).
  
\subsection*{A self-similar function}

In order to prove Theorem \ref{main}, it suffices to show that
$f^{**} = f$ and that $m_f = m_{f^*} = 2$, since
$\sqrt{2} - 1$ is a solution to $x= \frac{1}{2 + x}$.
Observe that $f^{-1}$ maps $[0,\frac{1}{8})$ linearly onto $[\frac{5}{8},\frac{3}{4})$ and this interval linearly onto $[\frac{1}{6},\frac{1}{4})$.
In particular, if $t \in [0,\frac{1}{8})$, $m=2$ is minimal such that $f^{-m}(t)$ is in $[0,\frac{3}{8})$.
This yields $m_f = 2$.
Moreover $f^{-2}(t) = \frac{2}{3} t + \frac{1}{6}$ on $[0,\frac{1}{8})$.
It follows that $f^*(t) = \frac{3}{2} t + \frac{3}{8}$ on $[0,\frac{1}{3})$.
Similar calculations of iterates of $f$ on the intervals $[\frac{1}{8}, \frac{5}{24})$ and $[\frac{5}{24},\frac{3}{8})$ yield a complete description of the first
return map for $f^{-1}$ on $[0,f(0)) = [0,\frac{3}{8})$.
Rescaling we obtain:
\[
f^*(t) = 
\begin{cases}
\frac{2}{3} t + \frac{4}{9} & \textrm{ if } 0 \leq t < \frac{1}{3} \\[3pt]
\frac{3}{2} t + \frac{1}{6} & \textrm{ if } \frac{1}{3} \leq t < \frac{5}{9} \\[3pt]
t - \frac{5}{9} & \textrm{ if } \frac{5}{9} \leq t < 1 
\end{cases}
\]
An analogous computation yields that $m_{f^*} = 2$ and $f^{**} = f$.

\subsection*{A simple PL homeomorphism with a complicated rational rotation number}

If $q > 0$, consider the homeomorphism $f_q$ given by exchanging the intervals $[0,\frac{1}{q+1})$ and $[\frac{1}{q+1},1)$:
$$
f_q (t) :=
\begin{cases}
qt + \frac{1}{q+1} & \textrm{ if } 0 \leq t < \frac{1}{q+1} \\[3pt]
\frac{1}{q} (t-\frac{1}{q+1}) & \textrm{ if } \frac{1}{q+1} \leq t < 1 
\end{cases} 
$$
Thus $f_q$ is the involution which maps $[0,\frac{1}{q+1})$ linearly onto its complement with slope $q$.
If $0 \leq \theta < 1$, set $f_{q,\theta} := R_\theta \circ f_q$.

The functions $f_{q,r}$ were already considered by Herman (with a different parameterization), who showed that if $\rot(f_{q,r})$ is irrational,
then there does not exist a $\sigma$-finite $F_{q,r}$-invariant measure which is absolutely continuous with respect
to Haar measure \cite[\S VI.7]{Herman}.
A routine computation shows that the function $f$ in Theorem~\ref{main} equals $f_{\frac{2}{3},\frac{1}{5}}^*$ and $m_f = 1$.
In particular $\rot(f_{\frac{2}{3},\frac{1}{5}}) = \frac{\sqrt{2}}{2}$. 
Another computation yields that the rotation number of $f_{\frac{3}{7},\frac{1}{10}}$
is the golden ratio $\frac{\sqrt{5}-1}{2}$, although the renormalization
procedure has period 6 when applied to this function.

In some cases the functions $f_{q,\theta}$ have surprisingly complex \emph{rational} rotation numbers.
For instance the rotation number of $f_{\frac{7}{8},\frac{3}{8}}$ is:
$$
\frac{668882489207594075334619723191244632191899781818066714800164040622}
{761960058189671511292372730373166431351657862332319255996727602151}
$$
Its continued fraction expansion has 147 digits after the initial 0 before terminating.
Thus while this function has a periodic point, its period exceeds $10^{65}$.
This fraction has the largest denominator of all rational values of $\rot(f_{q,r})$ when the numerators and denominators of
$q$ and $r$ are all single digits.

\subsection*{Evidence toward Conjecture \ref{quad_irrational}}

If $f \in \PLoS$, define $f^{*k}$ recursively by $f^{*0} = f$ and $f^{*(k+1)} = (f^{*k})^*$.
We conjecture that if $f \in \PLoS$ maps $\Q$ to $\Q$ and has slopes which are powers
of a single rational, then for some $k < l$, $f^{*k} = f^{*l}$.
Notice that the existence of such $k$ and $l$ is equivalent to the finiteness of 
$\{f^{*n} : n \in \N\}$.

If $f \in \PLoS$, define $B_f$ to be the set of all $t \in (0,1)$ such that
$t$ is the left endpoint of a maximal interval on which $f$ is linear.
The next proposition is a variation of the well known fact that Rauzy-Veech induction does not
increase the number of intervals of a generalized interval exchange transformation.

\begin{prop} \label{breakpoints}
For all $f \in \PLoS$, $|B_{f^*}| \leq |B_f|$.
\end{prop}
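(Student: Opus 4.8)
The plan is to understand how the breakpoints (discontinuities of the slope) of $f^*$ arise from those of $f$, and show each one traces back to a distinct breakpoint of $f$. Recall from the construction that $f^*(t) = \frac{1}{r} f^{-\ell(rt)}(rt)$ on $[0,1)$, where $r = f(0)$ and $\ell$ is the return-time function to $[0,r)$ under $f^{-1}$. So $f^*$ is built by first rescaling $[0,1)$ to $[0,r)$, then applying some iterate $f^{-k}$ (the iterate depending on which "floor" piece we are in), then rescaling back. Scaling by $r$ or $1/r$ is linear and introduces no breakpoints, so $B_{f^*}$ is (the image under $t \mapsto rt$, then back under $t \mapsto t/r$, of) the set of points in $[0,r)$ where the map $t \mapsto f^{-\ell(t)}(t)$ fails to be linear.

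**The key step** is to classify those non-linearity points. A point $t \in (0,r)$ lies in $B_{f^*}$ (after rescaling) exactly when one of two things happens: either $\ell$ is constant near $t$, say equal to $k$, and $t$ is a point where $f^{-k}$ is non-linear — equivalently $f^{-j}(t)$ is a breakpoint of $f^{-1}$ for some $0 \le j < k$, i.e.\ $f^{-j}(t) \in B_{f^{-1}}$ for some $j$ in the orbit segment; or $\ell$ jumps at $t$, which by the observation recorded before Proposition~\ref{rot_rec} happens only at the single point $t = f^{m_f}(r)$ where $\ell$ goes from $m_f$ to $m_f+1$. So
\[
|B_{f^*}| \le \bigl|\{\,t \in (0,r) : f^{-j}(t) \in B_{f^{-1}} \text{ for some } 0 \le j < \ell(t)\,\}\bigr| + 1,
\]
where the $+1$ accounts for the possible jump of $\ell$. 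Note $B_{f^{-1}}$ is the set of images under $f$ of breakpoints of $f$ together with possibly the point $f(0)$ — more precisely $|B_{f^{-1}}| \le |B_f| + 1$ since $0$ may become a breakpoint of $f^{-1}$. Actually one should be careful: $B_{f^{-1}} = f(B_f)$ as subsets of $(0,1)$ unless $f(0) \ne 0$ creates a breakpoint at $f(0)$; I will track this bookkeeping carefully.

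**The main obstacle** — and the crux of the argument — is showing the map that sends each such $t$ to the relevant breakpoint of $f$ is injective onto $B_f$, absorbing the two "$+1$" correction terms. The point is that the "return orbit segments" $\{t, f^{-1}(t), \dots, f^{-(\ell(t)-1)}(t)\}$ for varying $t \in [0,r)$ are pairwise disjoint and their union is all of $[0,1)$: indeed every point of the circle returns to $[0,r)$ under $f^{-1}$, and we can decompose $[0,1)$ as the disjoint union $\bigsqcup_{t} \{f^{-j}(t) : 0 \le j < \ell(t)\}$ over $t \in [0,r)$. Hence each breakpoint $b \in B_{f^{-1}} \cap [0,1)$ lies in exactly one such segment and is "hit" by exactly one $t$. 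So the number of $t \in (0,r)$ with some $f^{-j}(t) \in B_{f^{-1}}$ is at most $|B_{f^{-1}} \cap (0,1)|$, possibly minus one if $f(0) \in B_{f^{-1}}$ lies in the segment starting at $t=0$ (which does not contribute to $B_{f^*}$ since $0 \notin (0,1)$). I expect the delicate part is matching up the endpoint corrections: showing that the extra breakpoint of $f^{-1}$ at $f(0)$ (if present) and the extra breakpoint from the jump of $\ell$ at $f^{m_f}(r)$ together do not cause $|B_{f^*}|$ to exceed $|B_f|$, because the breakpoint of $f^{-1}$ created at $f(0)$ sits in the orbit segment of the point $f^{m_f-1}(r) \in [0,r)$ and is, after rescaling, precisely the point $f^{m_f}(r)/r$ where $\ell$ jumps — so these two corrections are the *same* breakpoint of $f^*$, not two. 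Once that coincidence is established the count closes.
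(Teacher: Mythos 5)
Your skeleton is the same as the paper's: pull each breakpoint of $f^*$ back along the return-orbit segment of $f^{-1}$ to a breakpoint of $f^{-1}$, use disjointness of the segments to get injectivity, and compare $|B_{f^{-1}}|$ with $|B_f|$. The paper phrases this as: the rescaled set $\{f(0)t : t\in B_{f^*}\}$ is contained in the set of first entries into $(0,f(0))$ of points of $B_{f^{-1}}$ under forward iteration of $f$, whence $|B_{f^*}|\le|B_{f^{-1}}|=|B_f|$.

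The gap is exactly in the endpoint bookkeeping that you defer (``I will track this bookkeeping carefully'', ``I expect the delicate part is\dots''), and as set up your cancellation does not close. You treat $f(0)$ as only \emph{possibly} a breakpoint of $f^{-1}$ (namely when the slope of $f$ jumps at $0$), and you plan to absorb the $+1$ from the jump of $\ell$ by identifying it with that conditional extra breakpoint. But the jump point $t_0=f^{m_f}(r)/r$ can be a genuine slope-discontinuity of $f^*$ even when the slope of $f$ is continuous at $0$: the two one-sided slopes of $f^*$ at $t_0$ are $\prod_{j<m_f}(f^{-1})'(f^{-j}(s^*))$ and the same product times the extra factor $(f^{-1})'(f(0)^+)=1/f'(0^+)$, since the right-hand branch applies $f^{-1}$ one more time; so generically $t_0\in B_{f^*}$ regardless of whether $f'$ is continuous at $0$. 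In that case your two ``$+1$'' corrections are not the same object, and your count only yields $|B_{f^*}|\le|B_f|+1$. In fact, with a ``slope-discontinuity in $(0,1)$'' notion of breakpoint the proposition is false: for $f=f_{\frac23,\frac15}$ the map $f$ has one interior slope break while $f^*$ (the map of Theorem 4) has two. What saves the statement is the paper's actual definition of $B_f$ (left endpoints of \emph{maximal intervals of linearity} of the map $[0,1)\to[0,1)$), under which the wrap point $f^{-1}(0)$ always lies in $B_f$ and, dually, $f(0)$ always lies in $B_{f^{-1}}$; this gives $|B_{f^{-1}}|=|B_f|$ exactly, the jump point of $f^*$ is simply the first entry into $(0,f(0))$ of the breakpoint $f(0)\in B_{f^{-1}}$, and no corrections are needed. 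You never establish $|B_{f^{-1}}|=|B_f|$ (your claim $B_{f^{-1}}=f(B_f)$ unless $f(0)$ creates a breakpoint ignores that $f(B_f)$ contains $f(f^{-1}(0))=0$, which is not counted, and that $f^{-1}(0)\in B_f$ always), so the crucial step is missing. There is also a small indexing slip: the segment containing $f(0)$ is based at $f^{m_f}(r)$ (it has length $m_f+1$), not at $f^{m_f-1}(r)$.
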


\begin{proof}
If $f$ has a fixed point, then $f^*$ is the identity function and $B_{f^*}$ is empty.
If not, it can be checked that
$\{f(0) t : t \in B_{f^*} \}$ is included in the set $X$ of all $f^k(t)$ such that
$t \in B_{f^{-1}}$ and $k \geq 0$ is minimal such that $f^k(t) \in (0,f(0))$.
In either case, the desired inequality follows.
\end{proof}

Even though Proposition \ref{breakpoints} is standard,
it is worth noting that the number of discontinuities of the derivative of $f^*$ may be greater (by one) than that for $f$---for instance
this is true when $f = f_{\frac{2}{3},\frac{1}{5}}$.

The following proposition is essentially due to Herman \cite[p.75]{Herman}
(see also \cite[3.4]{CunhaSmania}).

\begin{prop} \label{slope_prop}
For all $f \in \PLoS$, there is a $C > 1$ such that if $k \geq 0$ and $q$ is a slope of $f^{*k}$, then
$C^{-1} < q < C$.
\end{prop}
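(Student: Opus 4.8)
The plan is to control every slope by a single quantity attached to $f$: the total variation of $\log$ of the derivative, computed over the circle. For $h\in\PLoS$ let
\[
\nu(h):=\sum_{b}\bigl|\log h'(b^+)-\log h'(b^-)\bigr|,
\]
the sum taken over all breakpoints $b$ of $h$ in $S^1$ (in particular over the point $0$ when $h'(0^+)\ne h'(1^-)$); equivalently, $\nu(h)$ is the total variation of the step function $\log h'$ on $S^1$. The first, elementary step is: \emph{every slope $q$ of $h$ satisfies $e^{-\nu(h)/2}\le q\le e^{\nu(h)/2}$.} To see this, let $J_1,\dots,J_m$ be the maximal intervals of linearity of $h$ and $q_1,\dots,q_m$ the corresponding slopes. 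Since $h$ is a bijection of $[0,1)$, $\sum_i|J_i|=\sum_i q_i|J_i|=1$, so $\sum_i(q_i-1)|J_i|=0$ and therefore $\min_i\log q_i\le 0\le\max_i\log q_i$. On the other hand, the total variation of a step function on a circle is at least twice its oscillation (consider the two arcs joining a point where the maximum is attained to a point where the minimum is attained), so $\max_i\log q_i-\min_i\log q_i\le\nu(h)/2$; combining the two inequalities proves the claim. Hence it suffices to show $\nu(f^{*k})\le\nu(f)$ for all $k\ge0$, as then $C:=e^{\nu(f)/2}+1$ works (the values $e^{\pm\nu(f)/2}$ lie strictly between $C^{-1}$ and $C$).

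It is enough to prove that $\nu(h^*)\le\nu(h)$ for every fixed-point-free $h\in\PLoS$: iterating this and using $\nu(h^{-1})=\nu(h)$ (inverting $h$ reciprocates slopes and negates the logarithmic jumps, hence preserves $\nu$) gives $\nu(f^{*k})\le\nu(f^{*(k-1)})\le\cdots\le\nu(f)$. Now $h^*$, after rescaling by the factor $1/h(0)$---which leaves $\nu$ unchanged---is the first-return map $R$ of $g:=h^{-1}$ to the interval $I:=[0,h(0))$, regarded as a homeomorphism of the circle $I/{\sim}$; and $\nu(g)=\nu(h)$. So it suffices to show that passing to a first-return map does not increase $\nu$. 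Following the orbit structure exactly as in the proof of Proposition~\ref{breakpoints}: for a breakpoint $b$ of $R$, trace the orbit $b,g(b),\dots,g^{N}(b)=R(b)$, where $N$ is the return time (which may take different values just to the left and just to the right of $b$ only when $b$ lies on the boundary between two columns of the Rokhlin tower). The multiplicative jump $R'(b^+)/R'(b^-)$ is then the product of the multiplicative jumps of $g$ over those $g^i(b)$ that are themselves breakpoints of $g$ in $S^1$. Because $g$ and its iterates are injective, the breakpoints of $g$ attached in this way to distinct breakpoints of $R$ are pairwise disjoint, so the triangle inequality for $x\mapsto|\log x|$ gives
\[
\nu(R)=\sum_{b}\bigl|\log(R'(b^+)/R'(b^-))\bigr|\ \le\ \sum_{b'}\bigl|\log(g'({b'}^{+})/g'({b'}^{-}))\bigr|=\nu(g).
\]

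The point that needs care---and the only real work---is the bookkeeping for the two kinds of non-generic breakpoints of $R$: the breakpoint at the point $0\in I/{\sim}$ (which, as the remark after Proposition~\ref{breakpoints} notes, may be present for $R$ even when $h$ has no breakpoint at $0$) and the breakpoints at the boundaries between tower columns, where the return time changes. There one cannot write $R=g^N$ on a full neighbourhood of $b$: on one side the orbit reaches the return interval after some number of steps, while on the other side it makes one extra step through the point $0$. One must expand $R'(b^\pm)$ as the two appropriate products of slopes of $g$ along the two orbit segments, cancel the common factors, and verify that the surviving discrepancy is exactly a product of some of the logarithmic jumps of $g$, with each jump of $g$ used at most once over all breakpoints of $R$. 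This is the piecewise-linear form of the classical fact, due to Herman, that Rauzy--Veech induction introduces no new singularities; once it is in hand, the proposition follows.
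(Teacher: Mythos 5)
The paper gives no proof of Proposition~\ref{slope_prop}; it only cites Herman and Cunha--Smania. So your argument must be judged on its own. Your Step~1 (every slope of $h$ lies in $[e^{-\nu(h)/2},e^{\nu(h)/2}]$) is correct. The plan then hinges entirely on the monotonicity claim $\nu(h^*)\le\nu(h)$, and that claim is \emph{false}. The trouble is exactly at the two non-generic breakpoints you flag. Writing $r=h(0)$ and $g=h^{-1}$, the multiplicative jump of the first-return map $R$ at the glued point $0\sim r$ of $I/\!\sim$ carries the extra factor $1/g'(r^-)$, and the jump at the column boundary $c=g^{-m}(r)$ carries the extra factor $g'(r^+)$. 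These are \emph{raw slopes} of $g$, not jumps of $g$. They pair up \emph{multiplicatively} to give the jump $g'(r^+)/g'(r^-)$ of $g$ at $r$, but $\nu$ sums \emph{absolute values} of logarithms, so $|{-\log g'(r^-)}+J_0|+|\log g'(r^+)+J_c|$ is not controlled by $|\log(g'(r^+)/g'(r^-))|+|J_0|+|J_c|$. Your sentence ``verify that the surviving discrepancy is exactly a product of some of the logarithmic jumps of $g$'' is precisely the step that fails.

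A concrete counterexample: take $f\in\PLoS$ defined by $f(t)=2t+\tfrac23$ on $[0,\tfrac16)$, $f(t)=\tfrac12(t-\tfrac16)$ on $[\tfrac16,\tfrac56)$, $f(t)=2(t-\tfrac56)+\tfrac13$ on $[\tfrac56,1)$. Then $\nu(f)=4\log 2$ (breakpoints at $\tfrac16$ and $\tfrac56$, each with log-jump of magnitude $2\log2$; no break at $0$). Here $r=f(0)=\tfrac23$, $m_f=1$, $c=\tfrac14$, and one computes $R=$ the first return of $g=f^{-1}$ to $[0,\tfrac23)$ has slopes $2,\,1,\,\tfrac14$ on $[0,\tfrac14)$, $[\tfrac14,\tfrac13)$, $[\tfrac13,\tfrac23)$. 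As a circle map $R$ has log-jumps of magnitude $3\log2$, $\log2$, $2\log2$, so $\nu(f^*)=\nu(R)=6\log2>4\log2=\nu(f)$. (Here $g'(r^-)=g'(r^+)=\tfrac12$ is \emph{not} a breakpoint of $g$, yet it produces the unabsorbable factors $2$ at $0$ and $\tfrac12$ at $c$.) Note also that the oscillation of $\log(f^*)'$ is $3\log2>2\log2=\mathrm{osc}(\log f')$, so neither $\nu$ nor the oscillation is monotone under $*$, and iterating your one-step bound cannot close the argument. The standard route (Herman's) is different in kind: one writes $\log(f^{*k})'(t)$ directly as a Birkhoff sum of $\log g'$ along a first-return orbit segment for $g$ and invokes a Denjoy--Koksma type estimate to bound that Birkhoff sum by $\mathrm{Var}(\log g')$ uniformly in $k$, rather than trying to propagate a variation bound from one renormalization level to the next.
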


\subsection*{$F_{\frac{2}{3}}$ does not embed into Thompson's group $F$}
Let $I$ denote $[0,1]$ and $\PLoI$ denote the set of all piecewise linear orientation preserving homeomorphisms
of $I$.
If $p_1,\ldots,p_k \in \N$ are relatively prime, let $F_{p_1,\ldots,p_k}$ be the subgroup $\PLoI$ consisting of those homeomorphisms whose breakpoints lie in $\Z[\frac{1}{p_1},\ldots,\frac{1}{p_k}]$ and whose slopes are products of powers of the $p_i$'s. 
Let $F_{\frac{p}{q}}$ denote the subgroup of $F_{p,q}$ consisting of those homeomorphisms whose slopes are powers of $\frac{p}{q}$.
The groups $F_{p_1,\ldots,p_k}$
were introduced by Stein \cite{Stein} and are known as the \emph{Stein-Thompson groups}.
They generalize Richard Thompson's group $F_2$, which is often denoted $F$.
The Stein-Thompson groups, and $F$ in particular, serve as important examples in group theory.

We will conclude this paper showing that the methods of \cite{F-obstruction} can be used
to prove that $F_{\frac{2}{3}}$ does not embed into $F$.
We begin by recalling some definitions from \cite{F-obstruction}.
If $g,h\in \PLoI$ and $s \in I$ satisfy that $s < g(s) < h(s) < g(h(s))=h(g(s))$, then we define $\gamma:[s,h(s)) \to [s,h(s))$ by 
$\gamma(t) = h^{-\ell(t)}(g(t))$ where $\ell(t) \geq 0$ is such that $h^{-\ell(t)}(g(t)) \in [s,h(s))$.
If we view $[s,h(s))$ as the circle obtained from $[s,h(s)]$ by identifying $s$ and $h(s)$,
then $\gamma$ is a homeomorphism.
We say that the pair $(g,h)$ is an \emph{$F$-obstruction} if, for some choice of $s$ as above, the rotation number of $\gamma$ is irrational.

\begin{thm}\cite{F-obstruction}
If $g,h \in \PLoI$ are an $F$-obstruction, then the group generated by $g$ and $h$ does not embed into
Thompson's group $F$.
\end{thm}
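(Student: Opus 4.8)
The plan is to argue by contradiction. Assuming $\langle g,h\rangle$ embeds into $F$, I would produce inside $F$ a piecewise linear circle homeomorphism whose defining data are rational but whose rotation number is irrational, contradicting Theorem~\ref{CKL}. This is the strategy of \cite{F-obstruction}.

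So suppose $\phi\colon\langle g,h\rangle\hookrightarrow F$ and set $G=\phi(g)$, $H=\phi(h)$. The heart of the matter is a transfer lemma: the pair $(G,H)$ is again an $F$-obstruction, witnessed by a point $s'$ for which the associated first-return homeomorphism $\gamma'$ satisfies $\rot(\gamma')=\rot(\gamma)$. Granting this, the contradiction is quick. Since $GH(s')=HG(s')$, the point $s'$ is fixed by the commutator $[G,H]$; fixed-point sets of PL homeomorphisms are finite unions of points and intervals, and the isolated fixed points of an element of $F$ are rational, so $s'$ --- hence $H(s')$ and all the numbers $G^{-1}H^{j}(s')$ --- may be taken rational. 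Every linear piece of $\gamma'(t)=H^{-\ell(t)}(G(t))$ is a finite composition of $G^{\pm1}$ and $H^{\pm1}$, so $\gamma'$ is piecewise linear with slopes powers of $2$ and rational breakpoints, and $\gamma'(s')=G(s')$ is rational. Conjugating by the affine bijection $[s',H(s')]\to[0,1]$ preserves slopes and rotation numbers and turns $\gamma'$ into an element of $\PLoS$ which maps $\Q$ to $\Q$ and has slopes which are powers of the single integer $2$. Theorem~\ref{CKL} then forces $\rot(\gamma')\in\Q$, contradicting $\rot(\gamma')=\rot(\gamma)\notin\Q$.

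To see why the transfer lemma should hold, first I would observe that $\rot(\gamma)$ is really a combinatorial invariant of the orbit of $s$: iterating the construction gives $\gamma^k(s)=w_k(s)$ for an explicit word $w_k$ in $g$ and $h^{-1}$, formed by appending a factor $g$ at each step and then appending $h^{-1}$ precisely when $g\,\gamma^k(s)\ge h(s)$; since $\rot(\gamma)$ is irrational $\gamma$ has no fixed points, and $\rot(\gamma)$ is the asymptotic frequency $\lim_n\tfrac1n\#\{k<n:g\,\gamma^k(s)\ge h(s)\}$ of these ``unwrappings''. Thus $\rot(\gamma)$ depends only on the linear order that the $\langle g,h\rangle$-orbit of $s$ induces on $[s,h^2(s))$, together with the single relation $[g,h](s)=s$. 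It therefore suffices to locate, in the $\phi$-action on $[0,1]$, a point $s'$ fixed by $[G,H]$ at which the $\langle G,H\rangle$-orbit reproduces this local order type; the resulting $\gamma'$ then has the same combinatorics as $\gamma$, so $\rot(\gamma')=\rot(\gamma)$.

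The step I expect to be genuinely hard --- and which is the real content of \cite{F-obstruction} --- is this last transplantation: a bare group isomorphism need not carry orbit order types along, so one must exploit faithfulness of the $\phi$-action, finite generation of $\langle g,h\rangle$, and the abundance in $F$ of elements supported on arbitrarily small intervals. I would attempt it by a dynamical-realization argument: examine the germ groups of the original action at $s$ and at the endpoints of $[s,h^2(s))$, show that faithfulness forces the $\phi$-action to contain an isomorphic local configuration at some point $s'$, and then use the structure of $F$ to upgrade agreement of germ data to agreement of the full orbit order type on the window. Everything else is routine once such a lemma is in hand.
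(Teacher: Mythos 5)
The paper states this theorem as a citation to \cite{F-obstruction} and does not reproduce its proof, so there is nothing in the present text to compare your proposal against line by line. Your high-level plan --- transfer the obstruction through the embedding, then apply Theorem~\ref{CKL} --- is the natural strategy, and the clean-up steps are handled correctly: if a suitable $s'$ fixed by $[\phi(g),\phi(h)]$ exists, one can take it rational (isolated fixed points of elements of $F$ solve $2^k p + c = p$ with $c$ dyadic, hence are rational; non-isolated ones can be replaced by dyadic interior points), the first-return map $\gamma'$ is then PL with rational breakpoints and power-of-$2$ slopes, and affine conjugation to $[0,1)$ preserves both of these and the rotation number, so Theorem~\ref{CKL} would force $\rot(\gamma')\in\Q$.

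The genuine gap is the ``transfer lemma,'' which you correctly flag as the real content. As written, it is a restatement of the conclusion rather than an argument. An abstract group embedding $\phi\colon\langle g,h\rangle\hookrightarrow F$ carries no dynamical data by itself: there is no a priori reason for $[\phi(g),\phi(h)]$ to fix a point $s'$ at which the $\langle\phi(g),\phi(h)\rangle$-orbit reproduces the order type of the $\langle g,h\rangle$-orbit of $s$, and faithful PL actions of a fixed group on $[0,1]$ can exhibit quite different local configurations near fixed points of commutators. Appealing to ``germ groups'' and the ``abundance in $F$ of elements supported on arbitrarily small intervals'' does not close this: you would need to show that the rotation number $\rot(\gamma)$ --- equivalently, the order type of the orbit of $s$ in the window $[s,h(s))$, which you rightly identify as the combinatorial invariant to track --- is recoverable from the \emph{abstract} group $\langle g,h\rangle$, and then that every pair in $F$ realizing that abstract datum must in turn produce an $F$-obstruction with the same rotation number. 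That argument is precisely what \cite{F-obstruction} must supply; your sketch names the target but does not hit it, so the proposal has a gap at its central step.
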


Thus it suffices to show that there is a pair of elements of $\PLoI$ such that the associated $\gamma$ is topologically conjugate to
the homeomorphism $f$ in Theorem \ref{main}.
Define $g:[0,1]\to [0,1]$ by
\[
g(t):=
\begin{cases}
\frac{3}{2} t  & \textrm{ if } 0 \leq t < \frac{1}{3} \\[3pt]
\frac{2}{3} t + \frac{5}{18} & \textrm{ if } \frac{1}{3} \leq t < \frac{11}{24} \\[3pt]
t + \frac{1}{8} & \textrm{ if } \frac{11}{24} \leq t < \frac{5}{8} \\[3pt]
\frac{2}{3}t + \frac{1}{3} & \textrm{ if } \frac{5}{8} \leq t \leq 1 \\
\end{cases}
\]
Define 
\[
h(t) :=
\begin{cases}
\frac{27}{8} t & \textrm{ if } 0 \leq t < \frac{1}{54} \\[3pt]
\frac{9}{4} t + \frac{1}{48} & \textrm{ if } \frac{1}{54} \leq t < \frac{1}{4} \\[3pt]
t + \frac{1}{3} & \textrm{ if } \frac{1}{4} \leq t < \frac{7}{12} \\[3pt]
\frac{16}{81} t + \frac{779}{972} & \textrm{ if } \frac{7}{12} \leq t < \frac{95}{96} \\[3pt]
\frac{8}{27} t + \frac{19}{27} & \textrm{ if } \frac{95}{96} \leq t \leq 1 \\
\end{cases}
\]
Note that 
$$\frac{1}{4} < g\Bigl(\frac{1}{4}\Bigr) = \frac{3}{8} < h\Bigl(\frac{1}{4}\Bigr) = \frac{7}{12} < g\Bigl(h\Bigl(\frac{1}{4}\Bigr)\Bigr)= h\Bigl(g\Bigl(\frac{1}{4}\Bigr)\Bigr) = \frac{17}{24}.$$
We leave it to the reader to check that if $\gamma : [\frac{1}{4},\frac{7}{12}) \to [\frac{1}{4},\frac{7}{12})$ is the
circle homeomorphism associated to $g$, $h$, and $s=\frac{1}{4}$, then $\gamma$ is conjugated to the homeomorphism in Theorem \ref{main} by $t \mapsto 3 t - \frac{3}{4}$.

Michele Triestino has indicated in personal communication that by combining arguments in this paper with earlier work,
he can show $F_{\frac{2}{3}}$ is not $C^2$-smoothable.

\subsection*{Some further remarks}

We have computed the rotation numbers of the homeomorphisms $f_{q,r}$ for $0 < q < 1$ having numerator and denominator
at most $30$ and $r$ having numerator and denominator at most $1000$.
In all cases $\rot(r_{q,r})$ is algebraic and has degree at most 2.
Notice that it is sufficient to consider $q < 1$ since $f_{\frac{1}{q},r}$ is conjugate to $f_{q,r}$ (via a rotation) and hence has
the same rotation number.

Our computations were done both using C code compiled with the {\tt gmp.h} library and Mathematica code.\footnote{
Our code is posted in GitHub at {\tt https://github.com/jimbelk/rotPLoS}.}
We have observed the following qualitative and quantitative
behavior when computing the rotation numbers of $f_{q,r}$ for rational $0 < q,r < 1$:
\begin{enumerate}

\item Individual computations of rotation numbers run instantaneously on a standard desktop
(64 bit Intel Core i7-7700 CPU running 3.60GHz with 8 processors);
the batch of calculations described above took about 100 processor hours (we divided the computation into several programs and
executed them simultaneously, with the computation being completed in around 24 hours).

\item For some values of $q$ --- for instance $q = \frac{2}{3}$ and $\frac{2}{7}$ ---
the rotation numbers of the $f_{q,r}$ can be computed without the need for
an arbitrary precision library (provided the denominator for $r$ was at most 100).
Other values of $q$ --- for instance $\frac{5}{6}$ and $\frac{7}{8}$ ---
seemed to have a tendency to produce particularly complex computations.

\item \label{rat_f_q,r}
For most $q$ which we examined, there was an $r$ such that $f_{q,r}$ had irrational rotation number.
The exceptions were
$q = \frac{2}{7}$,
$\frac{4}{7}$,
$\frac{2}{9}$, 
$\frac{2}{11}$,
$\frac{7}{11}$,
$\frac{8}{11}$,
$\frac{3}{13}$,
$\frac{7}{13}$,
$\frac{10}{13}$,
$\frac{3}{14}$,
$\frac{2}{15}$,
$\frac{3}{16}$,
$\frac{11}{16}$,
$\frac{2}{17}$,
$\frac{3}{17}$,
$\frac{4}{17}$,
$\frac{5}{17}$,
$\frac{8}{17}$,
$\frac{10}{17}$,
$\frac{11}{17}$, 
$\frac{13}{17}$,
$\frac{14}{17}$,    
$\frac{11}{18}$,
$\frac{2}{19}$,
$\frac{3}{19}$,
$\frac{4}{19}$,
$\frac{6}{19}$,
$\frac{7}{19}$,
$\frac{8}{19}$,
$\frac{10}{19}$,
$\frac{13}{19}$,
$\frac{16}{19}$, 
$\frac{3}{20}$,
$\frac{11}{20}$,
$\frac{2}{21}$,
$\frac{17}{21}$,
$\frac{3}{22}$,
$\frac{7}{22}$,
$\frac{19}{22}$,
$\frac{2}{23}$,
$\frac{3}{23}$,
$\frac{4}{23}$,
$\frac{5}{23}$, 
$\frac{6}{23}$,
$\frac{7}{23}$,
$\frac{9}{23}$,
$\frac{11}{23}$,
$\frac{14}{23}$,
$\frac{16}{23}$,
$\frac{17}{23}$,
$\frac{19}{23}$,
$\frac{20}{23}$,
$\frac{2}{25}$,
$\frac{3}{25}$,
$\frac{4}{25}$,
$\frac{7}{25}$,
$\frac{9}{25}$,
$\frac{11}{25}$,
$\frac{13}{25}$,
$\frac{19}{25}$,
$\frac{21}{25}$,
$\frac{3}{26}$,
$\frac{5}{26}$,
$\frac{7}{26}$,
$\frac{9}{26}$,
$\frac{17}{26}$,
$\frac{2}{27}$,
$\frac{4}{27}$,
$\frac{5}{27}$,
$\frac{7}{27}$,
$\frac{10}{27}$,
$\frac{17}{27}$,
$\frac{19}{27}$,
$\frac{23}{27}$,
$\frac{3}{28}$,
$\frac{19}{28}$,
$\frac{25}{28}$,
$\frac{2}{29}$,
$\frac{3}{29}$,
$\frac{4}{29}$,
$\frac{5}{29}$,
$\frac{7}{29}$,
$\frac{8}{29}$,
$\frac{9}{29}$,
$\frac{10}{29}$,
$\frac{14}{29}$,
$\frac{16}{29}$,
$\frac{17}{29}$,
$\frac{18}{29}$,
$\frac{19}{29}$,
$\frac{22}{29}$,
$\frac{23}{29}$,
$\frac{25}{29}$,
$\frac{26}{29}$,
$\frac{17}{30}$.
Together with the reciprocals of integers (which yield rational rotation numbers by \cite{GhysSergiescu}),
these 95 exceptions are the only values $q < 1$ having denominator at most 30 
such that $\rot(f_{q,r})$ is rational whenever $r$ has denominator at most 1000.
In fact typically, when there is an $r$ for which $\rot(f_{q,r})$ is irrational, $r$ can be taken to have a denominator of the same order of magnitude as $q$'s.
 
\item For each fixed $q$ which we examined, there were a small number of possible periodic parts of the continued fraction expansion of the rotation number of $f_{q,r}$.
For instance, when $q=\frac{6}{7}$ and the denominator of $r$ was at most 1000,
the periodic part was always $(1,2)$ and when $q = \frac{3}{8}$ the periodic part was always $(1,1,1,2)$.
With the exception of $q=\frac{7}{9}$ and $\frac{8}{9}$ for which we observed 13 and 28 periodic parts,
values of $q$ with a single digit numerator and denominator always generated 
fewer than 10 periodic parts for the continued fraction expansion of $\rot(f_{q,r})$ for $r$ having denominator at most
1000.
Note though that increasing the search range for $r$ from having denominator at most $500$ to having denominator at most
1000 did often increase the number of observed periodic parts.

\item Somewhat paradoxically, rational values of $\rot(f_{q,r})$ tended to be more complex than irrational values.
For instance if $q$ has single digit numerator and denominator, the longest continued fraction expansion of the form
$\rot(f_{q,r})$ for $r$ having denominator at most 1000 was always larger than the number of digits before the end
of the first period in the expansion of an irrational $\rot(f_{q,r})$ with the same constraints on $r$.

\end{enumerate}
Generally speaking, it would be interesting to provide explanations of these phenomena.
More specifically, these observations suggest the following questions.
\begin{question}
Are there rationals $q > 0$ which are not powers of an integer such that if $f \in \PLoS$ maps
$\Q$ to $\Q$ and has slopes powers of $q$, then the rotation number of $f$ is rational?\footnote{
  When this note was circulated originally, we asked specifically about the value $q=\frac{2}{7}$.
  Rainney Wan, an undergraduate at Cornell University working with the 3rd author has discovered counterexamples
  to this question when $q=2/7$.
  Still, it is unclear if there is a rational value of $r$ such that $f_{\frac{2}{7},r}$ has irrational
  rotation number.}
\end{question}

\begin{question}
For which rational $q$ does $F_q$ embed into $F$?
\end{question}

It could be that $F_q$ embeds into $F$ only when $q$ is a power of an integer.

\begin{question}
Is there an algorithm which determines for which rationals $q > 0$ there is a rational $r \in (0,1)$ such that
$f_{q,r}$ has irrational rotation number?
\end{question}

In the next question, $T_q$ is the circle analog of $F_q$.

\begin{question}
If $q > 0$ is rational, 
is there an algorithm which decides whether an element of $T_q$ has finite order?
\end{question}

\begin{question}
What are the possible irrational rotation numbers of elements of $\PLoS$ which map $\Q$ to $\Q$?
What if the slopes are required to be powers of a given rational $q > 0$?
\end{question}

\end{document}